\documentclass[10pt]{amsart}
\title{Extended Future Tube Conjecture for Unipotent Subgroups}
\author{Maxim Kukol}
\address{School of Mathematics and Natural Sciences, University of Wuppertal, Gaußstr. 20, 42119 Wuppertal, Germany}
\email{kukol@uni-wuppertal.de}

\usepackage[utf8]{inputenc}
\usepackage[english]{babel}

\usepackage{amsmath, amsfonts, amssymb, mathtools, mathdots, faktor, polynom, amsthm}

\usepackage{tabularx, caption, setspace, calc, multirow, bigdelim}

\usepackage{graphicx, pdfpages}
\usepackage{tikz}
\usepackage{tikz-cd}
\usetikzlibrary{arrows, fadings, patterns}

\usepackage{pgfplots}
\pgfplotsset{compat=1.18}

\usepackage{hyperref}
\usepackage[english]{cleveref}

\usepackage{extarrows}
\usepackage{enumerate}

\theoremstyle{plain} 
\newtheorem{Def}{Definition}[section]
\newtheorem{Lem}[Def]{Lemma}
\newtheorem{Coro}[Def]{Corollary}
\newtheorem{Prop}[Def]{Proposition}
\newtheorem{Theo}[Def]{Theorem}

\theoremstyle{definition} 

\newtheorem{Rk}[Def]{Remark}

\theoremstyle{plain} 

\newcommand{\Skip}{\mbox{}\\}
\newcommand{\NN}{\mathbb N}

\newcommand{\RR}{\mathbb R}
\newcommand{\CC}{\mathbb C}

\newcommand{\MM}{\mathcal M}
\newcommand{\sslash}{\mathbin{/\mkern-6mu/}}

\DeclareMathOperator{\re}{Re}
\DeclareMathOperator{\im}{Im}

\begin{document}
\maketitle
\begin{abstract}\vspace{-3em}\emergencystretch=1em
	Let $\Omega$ be the Lorentz future cone in $\RR^{d+1}$ with respect to the Lorentz product and let $T^M$ be the $M$-fold product of the future tube $T=\RR^{d+1}+i\Omega$. The Lorentz group $\textup{SO}_0(1,d)$ acts diagonally on $T^M$, and its complexification $\textup{SO}(1,d)^\CC$ acts on $\CC^{(d+1)\times M}$. We prove that the domain $G^\CC\cdot T^M$ is a Stein manifold for any connected unipotent subgroup $G$ of $\textup{SO}_0(1,d)$.
\end{abstract}
	
\section{Introduction}\label{sec1}
The future tube is the domain $T=\RR^{d+1}+i\Omega$ in $\CC^{d+1}=\RR^{d+1}+i\RR^{d+1}$, where
$$
\Omega=\left\{\omega=(\omega_0,\dots,\omega_d)\in\RR^{d+1}\mid \omega_0>0,\ \left\langle \omega,\omega\right\rangle_{1,d}>0\right\}
$$
is the Lorentz future cone for the Lorentz product
$$
\left\langle v,\tilde v\right\rangle_{1,d}:=v_0\tilde v_0-v_1\tilde v_1-\dots-v_d\tilde v_d,\ v,\tilde v\in\RR^{d+1}.
$$
Let $\textup{SO}_0(1,d)$ be the identity component of the group of real $(d+1)\times(d+1)$ matrices preserving the Lorentz product. This group acts properly by matrix multiplication on $T$, hence diagonally on the $M$-fold product $T^M$ for $M\in\NN$. Moreover, this diagonal action extends to an algebraic action of its complexification $\textup{SO}(1,d)^\CC$ on $\CC^{(d+1)\times M}$. The domain $\textup{SO}(1,d)^\CC\cdot T^M$ is called the extended future tube. In the theory of quantized fields, the extended future tube conjecture arose more than 60 years ago. It asserts that this tube is a domain of holomorphy. For more details and the physical background of this conjecture, see \cite{HW, J, SV, W}. The conjecture was first proved for $d=3$ in \cite{Z} (see also \cite{H}) and later for arbitrary $d$ in \cite{HS}. 

In this article we prove that the domain $Z:=G^\CC\cdot T^M$ is a Stein manifold for any connected unipotent subgroup $G$ of $\textup{SO}_0(1,d)$ (Corollary \ref{Coro:ZStein}). To this end we prove that the orbit space $Z/G^\CC$ is a geometric quotient and a Stein manifold (Theorem \ref{Theo:ZGStein}), from which the statement follows by a classical result of Matsushima and Morimoto, see \cite[Theorem 4]{M}. It was shown in \cite{HS} that there exist a complex analytic Hilbert quotient $\left(\textup{SO}(1,d)^\CC\cdot T^M\right)\sslash\textup{SO}(1,d)^\CC$ and an $\textup{SO}_0(1,d)$-invariant strictly plurisubharmonic function $\rho$ on $T^M$, which induces a strictly plurisubharmonic exhaustion function on this quotient by applying the minimum principle, see \cite{H, L}. A crucial role is played by the existence of an analytic Hilbert quotient $\CC^{(d+1)\times M}\sslash\textup{SO}(1,d)^\CC$ and the fact that the extended future tube is saturated with respect to this quotient. However, for an arbitrary closed subgroup $G$ of $\textup{SO}_0(1,d)$ it is not clear whether a Hilbert quotient exists. In Section \ref{sec2} we determine conditions under which the geometric quotient $Z/G^\CC$ exists without assuming the existence of an ambient quotient. In the following sections we prove that these conditions are met in our setting. It turns out that the minimum principle is also applicable and that $\rho$ induces a strictly plurisubharmonic function on the quotient. However, as we will show by a counterexample, this function is not exhaustive (Remark \ref{Rk:nonexhaustion}). Therefore, we will modify this function in order to obtain an exhaustion function.

\subsection*{Acknowledgements}
The author would like to thank Bernd Stratmann for many helpful discussions and for his significant contribution to the proof of Theorem \ref{Theo:existence}. 

\section{Existence of a geometric quotient}\label{sec2}
Let $G$ be a connected real form of a complex Lie group $G^\CC$ and $V$ a holomorphic Stein $G^\CC$-manifold. Further, let $X$ be a $G$-stable domain in $V$ on which the $G$-action is free and proper. Set $Z:=G^\CC\cdot X$. In this section we determine conditions under which the orbit space $Z/G^\CC$ is a geometric quotient. By this we mean that it is a complex space such that the quotient map $\pi:Z\longrightarrow Z/G^\CC$ is holomorphic and its structure sheaf is the sheaf of invariants. 
\begin{Def}\mbox{}
\begin{enumerate}[i)]
	\item We say that $X$ is orbit connected with respect to the $G$-action if the set 
	$$
	O_x:=\left\{g\in G^\CC\mid g\cdot x\in X\right\}
	$$ 
	is connected for all $x\in X$.
	\item We say that $X$ is weakly orbit connected with respect to the $G$-action if the local $G^\CC$-orbit 
	$$
	O(x):=G^\CC\cdot x\cap X
	$$
	is connected for all $x\in X$.
\end{enumerate}
\end{Def} 
Observe that orbit connectedness implies weak orbit connectedness. Assume that there exists a $G$-invariant strictly plurisubharmonic function $\rho:X\longrightarrow\RR$. 
\begin{Def}
	We say that $\rho$ is an exhaustion function mod $G$ along the local $G^\CC$-orbits if the restriction of the induced function $\overline{\rho}:X/G\longrightarrow \RR$ to $O(x)/G$ is an exhaustion function for all $x\in X$. 
\end{Def}
We denote by $\mathfrak{g}$ the Lie algebra of $G$. The function $\rho$ induces a momentum map $\mu:X\longrightarrow \mathfrak{g}^*$ with respect to the Kähler form $\omega=2i\partial\overline{\partial}\rho$ via 
$$
\mu^\xi(x):=\mu(x)(\xi):=\dfrac{d}{dt}\Big|_{t=0}\rho(\exp(it\xi)\cdot x).
$$
This means that $\mu$ is an equivariant map with respect to the coadjoint action satisfying $d\mu^\xi = \iota_{\xi_{X}}\omega$, where $\xi_{X}$ denotes the vector field on $X$ induced by $\xi$. We denote by $\MM$ the zero level set of $\mu$. By the equivariance of $\mu$, the group $G$ acts on $\MM$, and the quotient $\MM/G$ is the corresponding symplectic reduction of $X$. Moreover, the $G$-invariance of $\rho$ implies that
$$
\MM=\left\{x\in X\mid x\text{ is a critical point of }\rho|_{O(x)}\right\}.
$$
For $x\in X$, let 
$$
\mathfrak{g}\cdot x:=\left\{\xi_X(x)\mid \xi\in\mathfrak{g}\right\}=T_x(G\cdot x)\subset T_xX
$$ 
be the tangent space of the $G$-orbit at $x$. Since $G$ acts freely on $X$, the zero level set $\MM$ is a smooth
manifold. Moreover, for all $x\in \MM$, we have  
$$
\textup{Ker}(d\mu(x))=T_{x}\MM=(\mathfrak{g}\cdot x)^{\perp_{\omega}}=\mathfrak{g}\cdot x\oplus (\mathfrak{g}^\CC\cdot x)^\perp
$$
and the symplectic reduction of $X$ is a symplectic manifold, see \cite[p. 123]{MW}.
\begin{Prop}\label{Prop:Momentcap}
Assume that $X$ is weakly orbit connected with respect to the $G$-action and that $\rho$ is an exhaustion function mod $G$ along the local $G^\CC$-orbits. Then for all $x\in X$, we have $G^\CC\cdot x\cap\MM=G\cdot x_0$ for some $x_0\in\MM$.
\end{Prop}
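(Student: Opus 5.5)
Fix $x\in X$. The plan is to show separately that $G^\CC\cdot x\cap\MM$ is nonempty and that it consists of a single $G$-orbit. Note first that $G^\CC\cdot x\cap\MM=O(x)\cap\MM$, since $\MM\subset X$. By the characterization of $\MM$ recorded above, a point $y\in O(x)$ lies in $\MM$ exactly when $y$ is a critical point of $\rho|_{O(y)}$. By weak orbit connectedness, $O(y)=O(x)$ for every $y\in O(x)$. So we must show that the induced function $\overline\rho$ on $O(x)/G$ has exactly one critical $G$-orbit.

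\emph{Existence.} By hypothesis, $\overline\rho|_{O(x)/G}$ is an exhaustion function on the connected space $O(x)/G$. It is continuous, since $G$ acts properly, so it attains a minimum at some $G\cdot x_0$. Then $x_0$ minimizes $\rho|_{O(x)}$. Now $O(x)$ is an open subset of the orbit $G^\CC\cdot x$, which is an immersed submanifold. Hence $x_0$ is a critical point of $\rho|_{O(x)}$, and so $x_0\in\MM$.

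\emph{Uniqueness.} Let $y\in O(x)\cap\MM$. By freeness of the $G^\CC$-action near $X$, we may use the local diffeomorphism $\mathfrak g\times G\to G^\CC$, $(\xi,g)\mapsto \exp(i\xi)g$, where this is relevant. The key tool is convexity along imaginary directions. For $\xi\in\mathfrak g$, set $\varphi(t)=\rho(\exp(it\xi)\cdot y)$, defined on the open set of $t$ with $\exp(it\xi)y\in X$. Since $\rho$ is strictly plurisubharmonic and $G$-invariant, $\varphi$ is convex, with
\[
\varphi''(t)=\omega\bigl(\xi_X,J\xi_X\bigr)\ge 0,
\]
and the inequality is strict when $\xi\neq 0$, because the action is free. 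Consequently, along any such curve starting at a point of $\MM$, where $\varphi'(0)=\mu^\xi(y)=0$, the function $\rho$ strictly increases in $|t|$. Hence every point of $\MM\cap O(x)$ is a strict local minimum of $\overline\rho$ on $O(x)/G$. Its $G$-orbit is therefore isolated among critical orbits.

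To get global uniqueness, suppose $y_1,y_2\in O(x)\cap\MM$ lie in distinct $G$-orbits. Because $\overline\rho$ is an exhaustion on the connected space $O(x)/G$ and has two distinct strict local minima, a mountain-pass argument (minimax over paths in $O(x)/G$ joining them) produces a critical orbit that is not a local minimum. This contradicts the previous step. The needed compactness (Palais–Smale) follows from the exhaustion property: sublevel sets of $\overline\rho$ are compact.

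I expect this last step to be the main obstacle. Two points need care. First, one must verify that $O(x)/G$ is a manifold on which the mountain-pass lemma applies; it is one, since $G$ acts freely and properly. Second, one must check that nondegenerate minima in the transverse $i\mathfrak g$-directions really are strict local minima of $\overline\rho$. The latter is a Hessian computation: at a critical point, the Hessian of $\rho|_{O(x)}$ restricted to $i\mathfrak g\cdot y$ is $\omega(\xi_X,J\xi_X)>0$. This makes the minimum nondegenerate modulo $G$, so the Morse-theoretic conclusion holds. The result is $G^\CC\cdot x\cap\MM=G\cdot x_0$.
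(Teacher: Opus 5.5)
Your proposal is correct and follows the paper's route: existence from the minimum of the exhaustion $\overline\rho|_{O(x)/G}$, and uniqueness because every critical $G$-orbit in $O(x)$ is a strict local minimum while $O(x)$ is connected; your mountain-pass argument just makes explicit the topological step the paper leaves implicit. One slip: freeness of $G^\CC$ is not a hypothesis of this proposition, but you do not need it, since at $y\in\MM$ equivariance gives $\omega(\xi_X,\eta_X)(y)=d\mu^\xi(\eta_X)(y)=\pm\mu^{[\xi,\eta]}(y)=0$, so $\mathfrak g\cdot y$ is isotropic, hence $\mathfrak g\cdot y\cap J(\mathfrak g\cdot y)=0$ and $(\xi,g)\mapsto\exp(i\xi)g\cdot y$ is a local diffeomorphism onto $O(x)$ near $(0,e)$.
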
 
\begin{proof}
First, observe that for all $x\in X$ and all $\xi \in \mathfrak{g}\setminus\{0\}$, the function $t\mapsto \rho(\exp(it\xi)\cdot x)$ is strictly convex. This follows from the fact that $\rho$ is strictly plurisubharmonic and the $G$-action is free. Let $x \in X$. Since $\rho$ is an exhaustion function mod $G$ along the $G^\CC$-orbits, the restriction $\rho|_{O(x)}$ attains a minimum at some point $x_0 \in O(x)$. In particular, $x_0$ is a critical point of $\rho|_{O(x)}$, which implies that $x_0 \in \mathcal{M}$. Hence, $O(x) \cap \mathcal{M} \neq \emptyset$. A simple calculation shows that the critical set $O(x) \cap \mathcal{M}$ consists of a discrete set of $G$-orbits. Since every critical point is a local minimum (see \cite{H}, Proof of Lemma 2 in Sect. 2) and $O(x)$ is connected, it follows that $O(x) \cap \mathcal{M} = G\cdot x_0$.
\end{proof}
\begin{Theo}\label{Theo:existence}
	Assume that $X$ is weakly orbit connected with respect to the $G$-action and that $\rho$ is an exhaustion function mod $G$ along the local $G^\CC$-orbits. If $G^\CC$ acts freely on $Z$, the orbit space $Z/G^\CC$ is a complex manifold and a geometric quotient.
\end{Theo}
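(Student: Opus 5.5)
The plan is to realize $Z/G^\CC$ as the symplectic reduction $\MM/G$ and to build holomorphic slices for the $G^\CC$-action from the Kähler geometry along $\MM$.

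\textbf{Step 1: the comparison map.} By Proposition \ref{Prop:Momentcap}, every $G^\CC$-orbit in $Z$ meets $\MM$ in exactly one $G$-orbit. Indeed, each $G^\CC$-orbit in $Z$ meets $X$, and $G^\CC\cdot x\cap\MM\subset G^\CC\cdot x\cap X=O(x)$. The inclusion $\MM\hookrightarrow Z$ therefore induces a bijection $\iota:\MM/G\to Z/G^\CC$. Since $G$ acts freely and properly on $X$, the quotient $\MM/G$ is a Hausdorff manifold. Its dimension is $\dim_\RR X-2\dim G$, because $\MM$ has codimension $\dim G$ in $X$.

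\textbf{Step 2: local holomorphic slices.} Fix $x_0\in\MM$. By the displayed identity $T_{x_0}\MM=\mathfrak g\cdot x_0\oplus(\mathfrak g^\CC\cdot x_0)^\perp$, the space $N:=(\mathfrak g^\CC\cdot x_0)^\perp$ is a complex subspace complementary to $\mathfrak g^\CC\cdot x_0$. The orbit $\mathfrak g^\CC\cdot x_0$ has full complex dimension $\dim G$ because $G^\CC$ acts freely on $Z$. Take a holomorphic chart of $V$ at $x_0$ and let $S$ be a small complex submanifold through $x_0$ tangent to $N$. Then $\Phi:G^\CC\times S\to Z$, $(g,s)\mapsto g\cdot s$, is a local biholomorphism near $(e,x_0)$. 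Moreover, $S\cap\MM$ is transversal near $x_0$ to the $G$-orbits in $\MM$, so the orbit map identifies small pieces of $S$ with neighbourhoods in $\MM/G$.

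\textbf{Step 3: global injectivity of the slice map.} This is the main obstacle. I must shrink $S$ so that $\Phi$ is injective on all of $G^\CC\times S$, i.e.\ so that $g\cdot s=s'$ with $s,s'\in S$ forces $g$ near $e$. Freeness alone gives this only locally in $g$. I would argue by contradiction. Suppose $s_n,s_n'\to x_0$ and $g_n\cdot s_n=s_n'$ with $g_n$ bounded away from $e$. Write $g_n=k_n\exp(i\xi_n)$ locally, or more robustly use the uniqueness in Proposition \ref{Prop:Momentcap}: project $s_n,s_n'$ to their unique $G$-orbits in $\MM$, which are close to $G\cdot x_0$. The projection is continuous because strict convexity along $t\mapsto\rho(\exp(it\xi)x)$ together with the exhaustion property mod $G$ along local orbits yields a uniform properness of the minimizer. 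Then use properness of the $G$-action on $X$ to force $g_n$ to lie in a compact set, extract a limit $g$ with $g\cdot x_0=x_0$, and conclude $g=e$ by freeness. That contradicts $g_n$ being bounded away from $e$. The delicate point is controlling the imaginary directions $\exp(i\xi_n)$, i.e.\ showing $|\xi_n|$ stays bounded. For this I would use the strict convexity of $t\mapsto\rho(\exp(it\xi)\cdot x)$ and the fact that $O(x)$ is connected, which is weak orbit connectedness. Weak orbit connectedness is also what rules out returns through a different component of $G^\CC\cdot x\cap X$.

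\textbf{Step 4: the complex structure.} Once $\Phi:G^\CC\times S\to G^\CC\cdot S$ is a biholomorphism onto an open saturated set, the sets $S$ serve as charts for $Z/G^\CC$. Transition maps are holomorphic because they are given by $s\mapsto$ the $S'$-component of $\Phi'^{-1}(s)$. Hausdorffness follows from the homeomorphism with $\MM/G$ via $\iota$; I would check that $\iota$ is open using the slices. Then $\pi$ is a holomorphic submersion. Finally, a $G^\CC$-invariant holomorphic function on $\pi^{-1}(U)$ is the same as a holomorphic function on $S$, i.e.\ on $U$. Hence $\mathcal O_{Z/G^\CC}=(\pi_*\mathcal O_Z)^{G^\CC}$, and $Z/G^\CC$ is a geometric quotient.
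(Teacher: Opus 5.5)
Your overall architecture is reasonable, but Step 3 as written does not go through. The paper proves that the $G^\CC$-action on $Z$ is proper and then cites Holmann's theorem for free proper holomorphic actions. Your Steps 2 and 4 redo the slice construction by hand, so your Step 3 (global injectivity of $G^\CC\times S\to Z$) has to carry exactly the content of the paper's properness argument.

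The first problem is the sentence ``use properness of the $G$-action on $X$ to force $g_n$ to lie in a compact set''. This does not apply to $g_n\in G^\CC$. Properness of $G$ only controls elements of $G$, and you never show that $g_n$ agrees with an element of $G$ up to factors tending to $e$.

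The second problem is that the mechanism you propose for this ``delicate point'' is not supported by the hypotheses. The exhaustion property is assumed orbit by orbit, with no uniformity in $x$, so it gives no ``uniform properness of the minimizer''. A global decomposition $g_n=k_n\exp(i\xi_n)$ is not available for a general real form. Strict convexity plus connectedness of $O(x)$ do not by themselves bound $|\xi_n|$. Meanwhile, the hypothesis that actually does this work, freeness of $G^\CC$ on $Z$, appears in your argument only at the very end, to conclude $g=e$.

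The missing idea is local, not convex-analytic.
\begin{itemize}
\item Because $G$ acts freely, $\mu$ is a submersion and $T_{x_0}X=T_{x_0}\MM\oplus i\mathfrak g\cdot x_0$. After shrinking, $(u,m)\mapsto u\cdot m$ is a diffeomorphism from $U\times V$ onto an open neighborhood of $x_0$ in $X$. Here $U\subset\exp(i\mathfrak g)$ is a neighborhood of $e$ and $V\subset\MM$ is a neighborhood of $x_0$.
\item Hence there are $u_n,u_n'\to e$ with $m_n:=u_n\cdot s_n\in\MM$ and $m_n':=u_n'\cdot s_n'\in\MM$, and $m_n,m_n'\to x_0$.
\item Since $m_n'=(u_n'g_nu_n^{-1})\cdot m_n$, Proposition \ref{Prop:Momentcap} gives $h_n\in G$ with $m_n'=h_n\cdot m_n$. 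Freeness of $G^\CC$ then gives $u_n'g_nu_n^{-1}=h_n$.
\item Properness of the $G$-action yields a subsequence $h_n\to h$ with $h\cdot x_0=x_0$. So $h=e$ and $g_n=(u_n')^{-1}h_nu_n\to e$, the desired contradiction.
\end{itemize}
No separate bound on imaginary parts is needed. Weak orbit connectedness and the exhaustion property enter only through Proposition \ref{Prop:Momentcap}; this is precisely the paper's argument.

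The same local product map also shows that $G^\CC\cdot A$ is open for every $G$-invariant open $A\subset\MM$, i.e.\ that $\iota$ is open. You need this for Hausdorffness in Step 4.

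A smaller point: $S$ is tangent to $\MM$ at $x_0$ but not contained in it, so $S\cap\MM$ need not be a submanifold of dimension $\dim S$. Identify $S$ with an open subset of $\MM/G$ through $Z/G^\CC$ and $\iota^{-1}$ instead.
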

\begin{proof}
By the fundamental results in \cite[Thms. 12 and 24]{Hol}, it is enough to show that the $G^\CC$-action is proper. To this end let $(z_n)_{n\in\NN}\subset Z$ be a sequence in $Z$ and $(g_n)_{n\in\NN}$ a sequence in $G^\CC$ such that $(g_n\cdot z_n,z_n)\xrightarrow{n \to \infty}(z',z)$. Proposition \ref{Prop:Momentcap} implies that there exist $x\in G^\CC\cdot z\cap\MM$ and $x'\in G^\CC\cdot z'\cap\MM$. Let $g,g'\in G^\CC$ with $z=g^{-1}\cdot x$ and $z'=(g')^{-1}\cdot x'$. We define new sequences $x_n=g\cdot z_n$ and $\widetilde{g}_n=g'g_ng^{-1}$. We have $x_n\xrightarrow{n \to \infty} x$ and $\widetilde{g}_n\cdot x_n\xrightarrow{n \to \infty} x'$. For simplicity we write $g_n$ instead of $\widetilde{g}_n$. Without loss of generality, we may assume that $(x_n)_{n\in\NN}, (g_n\cdot x_n)_{n\in\NN}\subset X$. Let $V\subset\MM$ be a relatively compact open subset of $\MM$ containing $x$. Since $G$ acts freely on $X$, the momentum map $\mu$ is a submersion and we have 
$$
T_xX=T_{x}\MM\oplus i\mathfrak{g}\cdot x.
$$
It follows that there exists a relatively compact subset $U$ of $\exp(i\mathfrak{g})$ containing $e$ such that the map 
$$
\Phi:U\times V\longrightarrow X,\ (g,x)\mapsto g\cdot x
$$
is an open embedding. Let $U_0\subset U$ and $V_0\subset V$ be relatively compact neighborhoods of $e$ and $x$, respectively. In particular, there exists an $N\in\NN$ such that $x_n\in\Phi(U_0\times V_0)$ for all $n\geq N$. Without loss of generality, we may assume that $(x_n)_{n\in\NN}\subset\Phi(U_0\times V_0)$. Then there exists a sequence $(u_n)_{n\in\NN}$ in $U_0$ converging in $U$ such that $u_n\cdot x_n\in\MM$ for all $n\in\NN$. Applying the same argument to $(g_n\cdot x_n)_{n\in\NN}$ and $x'$, we may assume that $(x_n)_{n\in\NN}, (g_n\cdot x_n)_{n\in\NN}\subset\MM$ are converging sequences in $\MM$. Proposition \ref{Prop:Momentcap} implies that for all $n\in\NN$ there exists $h_n\in G$ such that $g_n\cdot x_n=h_n\cdot x_n$. Since $G^\mathbb{C}$ acts freely on $Z$, it follows that $g_n=h_n \in G$ for all $n \in \mathbb{N}$. The assertion then follows from the properness of the $G$-action.
\end{proof}
\begin{Coro}\label{Coro:iota}\mbox{}
	\begin{enumerate}[i)]
		\item The map $\overline{\iota}:\MM/G\longrightarrow Z/G^\CC$ induced by the inclusion $\iota:\MM\longrightarrow Z$ is a biholomorphism.
		\item The restriction $\rho|_{\MM}$ induces a smooth strictly plurisubharmonic function $\psi:Z/G^\CC\longrightarrow\RR$.
	\end{enumerate}
\end{Coro}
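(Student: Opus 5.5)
For i), I will first show that $\overline{\iota}$ is a homeomorphism and then that it is holomorphic with holomorphic inverse. By Proposition \ref{Prop:Momentcap}, every $G^\CC$-orbit in $Z$ meets $\MM$, and it meets it in exactly one $G$-orbit, so $\overline{\iota}$ is bijective. It is continuous because $\pi\circ\iota$ is continuous and constant on $G$-orbits.

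For openness I will use the local model from the proof of Theorem \ref{Theo:existence}. The map $\Phi:U\times V\to X$, $(g,x)\mapsto g\cdot x$, is an open embedding near each point of $\MM$. Hence $G^\CC\cdot V$ is open in $Z$, and $\pi(G^\CC\cdot V)=\overline{\iota}(V/G)$ is open in $Z/G^\CC$. Properness of both actions, $G$ on $\MM$ and $G^\CC$ on $Z$, then gives a homeomorphism.

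Next I describe the complex structure on $\MM/G$. A natural choice is to transport the structure of $Z/G^\CC$, but then i) is a tautology. The content lies in identifying the smooth structures. The smooth structure on $\MM/G$ comes from the free proper action on the submanifold $\MM$. The decomposition $T_xX=T_x\MM\oplus i\mathfrak g\cdot x$ and $T_x\MM=\mathfrak g\cdot x\oplus(\mathfrak g^\CC\cdot x)^\perp$ show that $(\mathfrak g^\CC\cdot x)^\perp\subset T_x\MM$ maps isomorphically onto both $T_{[x]}(\MM/G)$ and $T_{\pi(x)}(Z/G^\CC)$. So $\overline{\iota}$ is a diffeomorphism. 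Since the orthogonal complement is complex, this is where $\MM/G$ acquires the complex structure (the Kähler reduction), and the differential of $\overline\iota$ at that point is complex linear. Thus $\overline\iota$ is a biholomorphism for the reduced complex structure.

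For ii), $\rho|_{\MM}$ is $G$-invariant and smooth, so it descends to a smooth function on $\MM/G$. I then set $\psi:=(\rho|_\MM)\circ\overline\iota^{-1}$, which is smooth by i). The remaining step is strict plurisubharmonicity, which I will prove via the standard reduction fact that $i\partial\overline\partial\psi$ is the reduced Kähler form. Concretely, $\pi^*\omega_{\mathrm{red}}=\omega|_\MM$ on $\MM$, and the reduced form is positive on $(\mathfrak g^\CC\cdot x)^\perp$ because $\omega$ is. To identify it with $2i\partial\overline\partial\psi$, I will compare $\psi\circ\pi$ with $\rho$ on the local slice $S=\Phi(U\times\{\text{complex transversal}\})$. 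Alternatively, I will use Heinzner's minimum principle: $\psi(\pi(z))=\min_{O(z)}\rho$ by Proposition \ref{Prop:Momentcap}, since minima are attained exactly on $\MM$. Then $\psi\circ\pi$ is plurisubharmonic by the minimum principle of \cite{H, L}. Strictness follows because, along a holomorphic transversal disc to the orbits, $\rho-\psi\circ\pi\ge 0$ vanishes to second order at $\MM$.

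The main obstacle is this last step: establishing strictness rigorously, i.e.\ showing that the second-order Taylor comparison of $\rho$ and $\psi\circ\pi$ at points of $\MM$ yields $i\partial\overline\partial\psi>0$. I expect to handle it via the Kähler-reduction identity $\omega_{\mathrm{red}}=2i\partial\overline\partial\psi$, checked in the local coordinates supplied by $\Phi$.
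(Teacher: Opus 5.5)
Your proposal is correct and follows the paper's route. Proposition \ref{Prop:Momentcap} gives bijectivity, and the splitting $T_x\MM=\mathfrak g\cdot x\oplus(\mathfrak g^\CC\cdot x)^\perp$ (a more careful version of the paper's dimension count) makes $\overline\iota$ a diffeomorphism along which the complex structure is transported. Part ii) comes from the minimum principle of \cite{H, L}, which the paper also uses to get strictness.

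If you want your own strictness argument, the holomorphic disc $D$ through $x\in\MM$ must be tangent to $(\mathfrak g^\CC\cdot x)^\perp$, not merely transversal to the orbit. Writing $D(t)=\exp(i\eta(t))\cdot m(t)$ with $m(t)\in\MM$, tangency gives $\eta=O(|t|^2)$. Since $\mu(m(t))=0$, this gives $\rho-\psi\circ\pi=O(|\eta|^2)=O(|t|^4)$ along $D$, so the Levi forms agree in the direction $D'(0)$. For a general transversal disc you only get $O(|t|^2)$, and nonnegativity then yields just the upper bound $\Delta(\psi\circ\pi\circ D)(0)\le\Delta(\rho\circ D)(0)$.
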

\begin{proof}
Since the momentum map $\mu$ is a submersion, we have
$$
\dim_\RR Z=\dim_\RR X=\dim_\RR\MM+\dim_\RR G. 
$$
Together with Proposition \ref{Prop:Momentcap} this implies that the map $\overline\iota$ is a diffeomorphism that induces the complex structure on the quotient $\MM/G$ from that on $Z/G^\CC$, cf. \cite[Sect. 2]{Kur}. Finally, applying the minimum principle (see \cite{H, L}), the induced strictly plurisubharmonic function $\psi$ is given by $\psi(q)=\inf\limits_{x\in\pi^{-1}(q)\cap X}\rho(x)$.
\end{proof}

\section{Orbit connectedness for unipotent subgroups of the Lorentz group}\label{sec3}
In this section we prove that for a connected unipotent subgroup $G$ of $\textup{SO}_0(1,d)$ the $M$-fold product $T^M$ of the future tube is orbit connected with respect to the $G$-action. 

Consider the Lie algebra of $\textup{SO}_0(1,d)$
$$
\mathfrak{so}(1,d)=\left\{\xi\in\textup{Mat}(d+1,\RR)\mid \left\langle \xi v,w\right\rangle_{1,d}=-\left\langle v,\xi w\right\rangle_{1,d}\text{ for all }v,w\in\RR^{d+1}\right\}.
$$ 
Since $\mathfrak{so}(1,d)$ is semisimple, there exists an Iwasawa decomposition 
$$
\mathfrak{so}(1,d)=\mathfrak{k}\oplus\mathfrak{a}\oplus\mathfrak{n},
$$
where $\mathfrak{n}\cong\RR^{d-1}$ is a nilpotent Lie algebra, and its non-zero elements are nilpotent of order $3$, see \cite[pp. 372-373]{Kn}. We recall that any two Iwasawa decompositions are conjugate in $\textup{SO}_0(1,d)$. The group $N=\exp(\mathfrak{n})$ is a unipotent abelian subgroup of $\textup{SO}_0(1,d)$. Its complexification is given by $N^\CC=N\exp(i\mathfrak{n})$ and is a unipotent complex algebraic group. Note that any connected unipotent subgroup $G$ of $\textup{SO}_0(1,d)$ is a closed subgroup of a conjugate of $N$. 

We choose suitable light-cone coordinates $(x^-,x^+,x')\in\CC\times\CC\times\CC^{d-1}$ on $\CC^{d+1}$ in which the extension of the Lorentz product to $\CC^{d+1}$ is given by 
$$
\left\langle x,\tilde x\right\rangle_{1,d}=\dfrac{1}{2}\left(x^-\tilde x^++x^+\tilde x^-\right)-\langle x',\tilde x'\rangle,
$$
where $\langle\cdot,\cdot\rangle$ denotes the extension of the Euclidean scalar product to $\CC^{d-1}$, such that $G$ is a subgroup of $N=\exp(\mathfrak{n})$, where the Lie algebra $\mathfrak{n}$ takes the form
$$
\mathfrak{n}=\left\{\begin{pmatrix}0&0&0^t\\0&0&2u^t\\u&0&0_{d-1}\end{pmatrix}\mid u\in\RR^{d-1}\right\}.
$$
In these coordinates, the Lorentz future cone is given by
$$
\Omega=\left\{\omega\in\RR^{d+1}\mid \omega^->0,\langle\omega,\omega\rangle_{1,d}>0\right\}.
$$
\begin{Lem}\label{Lem:product}Let $\xi\in\mathfrak{n}$ with $\xi\neq 0$.
	\begin{enumerate}[i)]
		\item For all $v\in \RR^{d+1}$ we have $\langle\xi v,\xi v\rangle_{1,d}\leq0$.
		\item For all $\omega\in\Omega$ we have $\langle\xi \omega,\xi \omega\rangle_{1,d}<0$.
	\end{enumerate}
\end{Lem}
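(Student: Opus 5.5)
Both parts reduce to an explicit computation of $\xi v$ in the light-cone coordinates. Write $\xi\in\mathfrak{n}$ as the matrix with parameter $u\in\RR^{d-1}\setminus\{0\}$, and $v=(v^-,v^+,v')$. Reading off the matrix given above, we obtain
$$
\xi v=\bigl(0,\ 2\langle u,v'\rangle,\ v^- u\bigr),
$$
so $(\xi v)^-=0$, $(\xi v)^+=2\langle u,v'\rangle$ and $(\xi v)'=v^-u$.

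We then insert this into the light-cone form of the Lorentz product,
$$
\langle x,x\rangle_{1,d}=x^-x^+-\langle x',x'\rangle .
$$
Since the $x^-$-component of $\xi v$ vanishes, the mixed term drops out, and
$$
\langle \xi v,\xi v\rangle_{1,d}=-(v^-)^2\,\|u\|^2\le 0 .
$$
This proves i). Equality holds exactly when $v^-=0$, because $u\neq 0$.

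For ii) it then suffices to note that every $\omega\in\Omega$ satisfies $\omega^->0$ by the description of $\Omega$ in these coordinates. Together with $u\neq0$ this gives $\langle\xi\omega,\xi\omega\rangle_{1,d}=-(\omega^-)^2\|u\|^2<0$.

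The only real obstacle is bookkeeping. One has to check the index convention of the block matrix, namely that the first coordinate is $x^-$, the second is $x^+$, and the bottom-left block $u$ multiplies $v^-$. One also has to confirm that the normalization $\tfrac12(x^-\tilde x^++x^+\tilde x^-)$ makes the cross term vanish. Since the $-$-component of $\xi v$ is zero, the cross term vanishes regardless of the constants involved, so the argument is robust.
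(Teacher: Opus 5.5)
Your proposal is correct and follows the paper's proof: you compute $\xi v=(0,\,2\langle u,v'\rangle,\,v^-u)$ in the light-cone coordinates, which gives $\langle\xi v,\xi v\rangle_{1,d}=-(v^-)^2\|u\|^2$, and for ii) you use $\omega^->0$. Your bookkeeping of the block matrix and of the vanishing cross term is accurate.
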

\begin{proof}
Let $\xi\in\mathfrak{n}$ with $\xi\neq 0$. For $v=\left(v^-,v^+,v'\right)^t\in \RR^{d+1}$ a computation shows that
$$
\left\langle\xi v,\xi v\right\rangle_{1,d}=-(v^-)^2\|u\|^2\leq 0.
$$
For $\omega\in\Omega$, we have $\omega^->0$. Therefore, $\left\langle\xi\omega,\xi\omega\right\rangle_{1,d}<0$.
\end{proof}
\begin{Prop}\label{Prop:orbitcon}
	Let $G<\textup{SO}_0(1,d)$ be a connected unipotent subgroup. Then $T^M$ is orbit connected with respect to the $G$-action.
\end{Prop}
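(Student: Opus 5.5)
Since $G$ is a closed connected subgroup of the abelian group $N=\exp(\mathfrak{n})$ and $\exp:\mathfrak{n}\to N$ is an isomorphism of the vector group $\mathfrak{n}\cong\RR^{d-1}$ onto $N$, we have $G=\exp(\mathfrak{g})$ for a linear subspace $\mathfrak{g}\subset\mathfrak{n}$. Moreover $G^\CC=G\exp(i\mathfrak{g})$, and every element of $G^\CC$ can be written uniquely as $h\exp(i\eta)$ with $h\in G$ and $\eta\in\mathfrak{g}$. Because $G$ preserves $T^M$, for $x\in T^M$ we obtain
$$
O_x=G\cdot\left\{\exp(i\eta)\mid \eta\in\mathfrak{g},\ \exp(i\eta)\cdot x\in T^M\right\}.
$$
Hence it suffices to show that the set $C_x:=\{\eta\in\mathfrak{g}\mid \exp(i\eta)\cdot x\in T^M\}$ is connected. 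It contains $0$, and I will prove that it is convex.

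Since nonzero elements of $\mathfrak{n}$ are nilpotent of order $3$, we have $\exp(i\eta)=I+i\eta-\tfrac12\eta^2$. Write a column of $x$ as $z=a+ib$ with $a\in\RR^{d+1}$ and $b\in\Omega$. Then
$$
\im(\exp(i\eta)z)=b+\eta a-\tfrac12\eta^2 b.
$$
Let $\eta$ correspond to $u\in\RR^{d-1}$ in the matrix description of $\mathfrak{n}$. Then $\eta v=(0,2\langle u,v'\rangle,v^-u)$ and $\eta^2v=(0,2v^-\|u\|^2,0)$. This gives
$$
\im(\exp(i\eta)z)=\left(b^-,\ b^++2\langle u,a'\rangle-b^-\|u\|^2,\ b'+a^-u\right).
$$
The first coordinate $b^-$ stays positive, independently of $u$. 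So the condition that $\exp(i\eta)z\in T$ reduces to $q(u)>0$, where
$$
q(u):=b^-\left(b^++2\langle u,a'\rangle-b^-\|u\|^2\right)-\|b'+a^-u\|^2 .
$$

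The key observation is that $q$ is a quadratic polynomial in $u$ whose quadratic part is $-\left((b^-)^2+(a^-)^2\right)\|u\|^2$. Since $b^->0$, this part is negative definite, so $q$ is strictly concave. Therefore the superlevel set $\{u\mid q(u)>0\}$ is convex. Hence $C_x$ is the intersection of the subspace $\mathfrak{g}$ with finitely many convex sets, one for each of the $M$ columns. It is therefore convex and contains $0$, so it is connected. Consequently $O_x=G\cdot\exp(iC_x)$ is connected, as the image of the connected set $G\times C_x$ under a continuous map.

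The only real point is the explicit computation of $\im(\exp(i\eta)z)$ in light-cone coordinates. In particular, one must check that the $\|u\|^2$-terms have the right sign, so that the Lorentz square is concave in $u$ rather than merely quadratic. This is the same sign phenomenon as in Lemma \ref{Lem:product}. Everything else follows formally from the abelian structure of $N$ and the decomposition $G^\CC=G\exp(i\mathfrak{g})$.
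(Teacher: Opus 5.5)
Your proof is correct and follows the same route as the paper. You reduce via $G^\CC=G\exp(i\mathfrak{g})$ to the set of $\eta\in\mathfrak{g}$ with $\exp(i\eta)\cdot x\in T^M$, and show it is convex because the Lorentz square of $\im(\exp(i\eta)\cdot x_j)$ is a quadratic function of $\eta$ with negative definite quadratic part. The paper argues coordinate-free via Lemma \ref{Lem:product} and completes the square to obtain an intersection of balls, whereas you compute explicitly in light-cone coordinates and also verify that the $x^-$-component of the imaginary part stays positive, which the paper leaves implicit.
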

\begin{proof}
Let $x=(x_1,\dots,x_M)\in T^M$ with $x_j=v_j+i\omega_j$ and let
$$
O_x^{\im}:=\left\{\exp(i\xi)\in \exp(i\mathfrak{g})\mid \exp(i\xi)\cdot x\in T^M\right\}.
$$
Since $G^\CC=G\exp(i\mathfrak{g})$, we have $O_x=G\cdot O_x^{\im}$. As $G$ is connected, it suffices to show that $O_x^{\im}$ is connected. Let $\xi\in\mathfrak{g}$ with $\xi\neq 0$. Since $\xi$ is nilpotent of order $3$, we have 
$$
\exp(i\xi)\cdot x_j=x_j+i\xi x_j-\frac{1}{2}\xi^2x_j.
$$
We compute
\begin{align*}
	&\left\langle \im(\exp(i\xi)\cdot x_j),\im(\exp(i\xi)\cdot x_j)\right\rangle_{1,d}\\
	&=\left\langle\omega_j+\xi v_j-\frac{1}{2}\xi^2\omega_j,\omega_j+\xi v_j-\frac{1}{2}\xi^2\omega_j\right\rangle_{1,d}\\
	&=\left\langle\omega_j,\omega_j\right\rangle_{1,d}+2\left\langle\omega_j,\xi v_j\right\rangle_{1,d}+\left(\left\langle\xi\omega_j,\xi\omega_j\right\rangle_{1,d}+\left\langle\xi v_j,\xi v_j\right\rangle_{1,d}\right).
\end{align*}
By definition we have $\exp(i\xi)\cdot x_j\in T$ for all $j=1,\dots,M$ if and only if
$$
\left\langle \im(\exp(i\xi)\cdot x_j),\im(\exp(i\xi)\cdot x_j)\right\rangle_{1,d}>0.
$$
Rearranging this inequality yields
\begin{equation}\tag{$*$}\label{ineq}
-2\left\langle\omega_j,\xi v_j\right\rangle_{1,d}-\left\langle\xi\omega_j,\xi \omega_j\right\rangle_{1,d}-\left\langle\xi v_j,\xi v_j\right\rangle_{1,d}<\left\langle\omega_j,\omega_j\right\rangle_{1,d}.
\end{equation}
Lemma \ref{Lem:product} shows that the map 
$$
q_{x_j}:\mathfrak{g}\longrightarrow\RR,\ q_{x_j}(\xi)=-\left\langle\xi\omega_j,\xi \omega_j\right\rangle_{1,d}-\left\langle\xi v_j,\xi v_j\right\rangle_{1,d}
$$ 
is a positive quadratic form. Hence, it induces an inner product $g_{x_j}$ on $\mathfrak{g}$. Let $\|\cdot\|_{x_j}$ be the induced norm. Moreover, since the map $\xi\mapsto \left\langle\omega_j,\xi v_j\right\rangle_{1,d}$ is linear, there exists $\xi_{x_j}\in\mathfrak{g}$ such that $\left\langle\omega_j,\xi v_j\right\rangle_{1,d}=g_{x_j}(\xi,\xi_{x_j})$ for all $\xi\in\mathfrak{g}$. We get that (\ref{ineq}) is equivalent to
$$
\|\xi\|_{x_j}^2-2g_{x_j}(\xi,\xi_{x_j})<\left\langle\omega_j,\omega_j\right\rangle_{1,d}.
$$
Adding and subtracting $\|\xi_{x_j}\|_{x_j}^2$ shows that this is equivalent to
$$
\|\xi-\xi_{x_j}\|_{x_j}^2<\left\langle\omega_j,\omega_j\right\rangle_{1,d}+\|\xi_{x_j}\|_{x_j}^2.
$$
We set $R_{x_j}^2:=\left\langle\omega_j,\omega_j\right\rangle_{1,d}+\|\xi_{x_j}\|_{x_j}^2$ and $B_x:=\bigcap\limits_{j=1}^M B_{R_{x_j}}(\xi_{x_j})$, where $B_{R_{x_j}}(\xi_{x_j})$ denotes the ball with radius $R_{x_j}$ around $\xi_{x_j}$ in $\mathfrak{g}$. It follows that $\exp(i\xi)\cdot x\in T^M$ is equivalent to $\xi\in B_x$. Thus, $O^{\im}_x=\exp(iB_x)$, which is therefore connected.
\end{proof}

\section{The $\textup{SO}_0(1,d)$-invariant strictly plurisubharmonic function}\label{sec4}
The characteristic function of the Lorentz future cone is defined up to a constant by $\varphi(\omega)=\langle\omega,\omega\rangle_{1,d}^{-\frac{d+1}{2}}$. Its main properties are that for every sequence $(\omega_n)_{n\in\NN}\subset\Omega$ converging to a boundary point $\omega_0\in\partial\Omega$ we have $\lim\limits_{n\to\infty}\varphi(\omega_n)=\infty$ and that $\log\varphi$ is an $\textup{SO}_0(1,d)$-invariant strictly convex function, see \cite[pp. 10-11]{F}.  The Bergman kernel of the future tube $T$ is up to a positive constant given by $K(x,x)=c\varphi(2\im x)^2$, see \cite[p. 177]{F}. In particular,
$$
\rho:T^M\longrightarrow\RR,\ \rho(x_1,\dots,x_M)=\sum\limits_{j=1}^M\frac{1}{\langle\im(x_j),\im(x_j)\rangle_{1,d}}
$$
is an $\textup{SO}_0(1,d)$-invariant strictly plurisubharmonic function. In this section we prove the following proposition.
\begin{Prop}\label{Prop:exhaustionmodG}
Let $G<\textup{SO}_0(1,d)$ be a connected unipotent subgroup. Then $\rho$ is an exhaustion function mod $G$ along the local $G^\CC$-orbits.
\end{Prop}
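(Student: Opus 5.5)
We need to show that for each $x\in T^M$, the function induced by $\rho$ on $O(x)/G$ is an exhaustion. By Proposition \ref{Prop:orbitcon} and its proof, we have $O_x=G\cdot\exp(iB_x)$, where $B_x$ is a bounded convex open set in $\mathfrak{g}$. Since $G^\CC$ acts freely on $G^\CC\cdot x$, the map $\exp(iB_x)\to O(x)/G$, $\exp(i\xi)\mapsto G\exp(i\xi)\cdot x$, is a bijection. This uses that $G^\CC=G\exp(i\mathfrak{g})$ with uniqueness of the decomposition for the unipotent group $G$. It is in fact a homeomorphism onto $O(x)/G$, because $G$ acts properly. So the claim reduces to the following: the function $f(\xi):=\rho(\exp(i\xi)\cdot x)$ on $B_x$ tends to $+\infty$ as $\xi\to\partial B_x$. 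Since $B_x$ is bounded, $\overline{B_x}$ is compact, and this property is exactly exhaustiveness of $f$ on $B_x$.

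For the boundary behaviour, use the computation from the proof of Proposition \ref{Prop:orbitcon}. For each $j$ we have
$$
\left\langle \im(\exp(i\xi)\cdot x_j),\im(\exp(i\xi)\cdot x_j)\right\rangle_{1,d}=R_{x_j}^2-\|\xi-\xi_{x_j}\|_{x_j}^2=:h_j(\xi).
$$
This is a continuous (quadratic) function on all of $\mathfrak{g}$, so $f(\xi)=\sum_j h_j(\xi)^{-1}$ on $B_x=\bigcap_j\{h_j>0\}$. Now let $\xi_n\in B_x$ with $\xi_n\to\xi_\infty\in\partial B_x$. Then $\xi_\infty$ lies in the closure of every ball but not in their intersection, so $h_{j_0}(\xi_\infty)=0$ for some $j_0$. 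Hence $h_{j_0}(\xi_n)\to0^+$, while every other term $h_j(\xi_n)^{-1}$ is positive. Therefore $f(\xi_n)\to\infty$, and all sublevel sets $\{f\le c\}$ are closed in $\overline{B_x}$ and avoid $\partial B_x$, hence compact in $B_x$.

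The main point needing care is identifying $O(x)/G$ topologically with $B_x$, so that compactness of sublevel sets transfers. Take a sequence $y_n\in O(x)$ with $\rho(y_n)\le c$, and write $y_n=g_n\exp(i\xi_n)\cdot x$ with $g_n\in G$ and $\xi_n\in B_x$. By $G$-invariance, $\rho(y_n)=f(\xi_n)\le c$. A subsequence of $\xi_n$ converges in $B_x$, so the $G$-orbits of $y_n$ converge in $O(x)/G$. This gives sequential compactness of sublevel sets in the quotient topology, which is metrizable since the proper $G$-action makes $X/G$ Hausdorff and second countable. This completes the proposed proof.
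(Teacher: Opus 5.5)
Your proof is correct and follows the paper's route. Both arguments parametrize the local orbit modulo $G$ by $\xi\mapsto\exp(i\xi)\cdot x$ and use the ball computation from Proposition \ref{Prop:orbitcon} to show that $\{\xi\in\mathfrak{g} : \rho(\exp(i\xi)\cdot x)\le r\}$ is compact; the paper gets boundedness from the shrunken balls $B_{x,r}$ and closedness from $\rho^{-1}(-\infty,r]$ being closed, whereas you use the blow-up of $\sum_j h_j^{-1}$ at $\partial B_x$. Your bijectivity/homeomorphism claim and the metrizability remark are not needed, since the sublevel set in $O(x)/G$ is just the continuous image of this compact set.
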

\begin{proof}
Let $\pi:T^M\longrightarrow T^M/G$ be the quotient map, $x=(x_1,\dots,x_M)\in T^M$ and $r\in\RR$. Since $G^\CC=G\exp(i\mathfrak{g})$, we have 
$$
\pi(O(x)\cap\rho^{-1}(-\infty,r])=\pi(\exp(i\mathfrak{g})\cdot x\cap \rho^{-1}(-\infty,r]).
$$
The proposition will follow if we prove that $\exp(i\mathfrak{g})\cdot x\cap \rho^{-1}(-\infty,r]$ is compact. First, note that $\rho^{-1}(-\infty,r]\neq\emptyset$ if and only if $r>0$. Consider the map
$$
\beta_x:\mathfrak{g}\longrightarrow\CC^{(d+1)\times M},\ \beta_x(\xi)=\exp(i\xi)\cdot x.
$$
The properties of the characteristic function imply that $\rho^{-1}(-\infty,r]$ is closed in $\CC^{(d+1)\times M}$. Since $\beta_x$ is continuous, the set $D:=\beta_x^{-1}(\rho^{-1}(-\infty,r])$ is closed. We have $\beta_x(D)=\left(\exp(i\mathfrak{g})\cdot x\right)\cap\rho^{-1}(-\infty,r]$, so the statement will follow once we show that $D$ is bounded. For $\xi\in D$ and for all $j=1,\dots, M$, we have 
$$
\left\langle \im(\exp(i\xi)\cdot x_j),\im(\exp(i\xi)\cdot x_j)\right\rangle_{1,d}>\frac{1}{r}.
$$
Setting $x_j=v_j+i\omega_j$, the same computation as in the proof of Proposition \ref{Prop:orbitcon} yields
$$
\|\xi-\xi_{x_j}\|_{x_j}^2<\left\langle\omega_j,\omega_j\right\rangle_{1,d}+\|\xi_{x_j}\|_{x_j}^2-\frac{1}{r}.
$$
We set $R_{x_j,r}^2:=\left\langle\omega_j,\omega_j\right\rangle_{1,d}+\|\xi_{x_j}\|_{x_j}^2-\frac{1}{r}$ and $B_{x,r}:=\bigcap\limits_{j=1}^M B_{R_{x_j,r}}(\xi_{x_j})$. This shows that $D\subset B_{x,r}$, implying that it is bounded.
\end{proof}

\section{Kähler reduction for unipotent subgroups of the Lorentz group}\label{sec5}
Let $\rho$ be the $\textup{SO}_0(1,d)$-invariant strictly plurisubharmonic function introduced in Section \ref{sec4} and $\mu_{\textup{SO}_0(1,d)}:T^M\longrightarrow \mathfrak{so}(1,d)^*$ the induced momentum map (cf. Section \ref{sec2}). Let $G<\textup{SO}_0(1,d)$ be a connected unipotent subgroup and $Z:=G^\CC\cdot T^M$. By restriction the momentum map $\mu_{\textup{SO}_0(1,d)}$ induces a momentum map $\mu:T^M\longrightarrow\mathfrak{g}^*$. In this section we prove that $Z/G^\CC$ is a Stein manifold. This will in turn imply that $Z$ is Stein. 

Let $(x^-,x^+,x')\in\CC\times\CC\times\CC^{d-1}$ be the light-cone coordinates on $\CC^{d+1}$ introduced in Section \ref{sec3}. We associate $G^\CC$ with a $\CC$-linear subspace of $\CC^{d-1}\cong \mathfrak{n}^\CC$. The action of $G^\CC$ on $\CC^{d+1}$ in these coordinates is given by
$$
g\cdot(x^-,x^+,x')=(x^-,x^++2\langle x',g\rangle+x^-\langle g,g\rangle ,x'+x^-g).
$$
Moreover, for $x=(x_1,\dots,x_M)\in T^M$ with $x_j=v_j+i\omega_j$, the momentum map is given by 
$$
\mu^{g}(x)=-2\left\langle g,\sum\limits_{j=1}^M\dfrac{\omega^{-}_{j}v_j'-v^{-}_{j}\omega_j'}{\langle\omega_j,\omega_j\rangle_{1,d}^2}\right\rangle.
$$
Let $\pi_{G}$ be the orthogonal projection onto $G$ with respect to the decomposition $\RR^{d-1}=G\oplus G^\perp$. Observe that the zero level set of $\mu$ is given by
$$
\MM=\left\{x\in T^M\mid \pi_{G}\left(\sum\limits_{j=1}^M\dfrac{\omega^{-}_{j}v_j'-v^{-}_{j}\omega_j'}{\langle\omega_j,\omega_j\rangle_{1,d}^2}\right)=0\right\}.
$$
\begin{Lem}\label{Lem:isotropyN}\Skip
The group $N^\CC$ acts freely on $N^\CC\cdot T^M$.
\end{Lem}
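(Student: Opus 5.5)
Since freeness is invariant under conjugation within an orbit (the stabilizer of $h\cdot x$ is $hN^\CC_x h^{-1}$), it suffices to show that the isotropy group $N^\CC_x$ is trivial for every $x=(x_1,\dots,x_M)\in T^M$. I will use the explicit formula for the action in light-cone coordinates, where $g\in N^\CC\cong\CC^{d-1}$ acts by
$$
g\cdot(x^-,x^+,x')=(x^-,x^++2\langle x',g\rangle+x^-\langle g,g\rangle ,x'+x^-g).
$$
If $g\cdot x=x$, then in particular $g\cdot x_1=x_1$. Looking at the last $d-1$ components gives $x_1'+x_1^-g=x_1'$, that is, $x_1^-g=0$. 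So the whole argument reduces to showing that $x_1^-\neq 0$.

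For this, write $x_1=v_1+i\omega_1$ with $\omega_1\in\Omega$. In the light-cone coordinates chosen in Section \ref{sec3}, the cone $\Omega$ is described by $\omega^->0$ together with $\langle\omega,\omega\rangle_{1,d}>0$. Hence $\im(x_1^-)=\omega_1^->0$, so $x_1^-\neq0$, and therefore $g=0$, which is the identity element of $N^\CC$. Thus the stabilizer of every point of $T^M$ is trivial.

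Finally, a general point of $N^\CC\cdot T^M$ has the form $z=h\cdot x$ with $h\in N^\CC$ and $x\in T^M$. Its stabilizer is $hN^\CC_xh^{-1}$, which is trivial by the above; since $N^\CC$ is abelian it even equals $N^\CC_x$. This proves that $N^\CC$ acts freely.

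There is no real obstacle. The only point requiring care is that the coordinate description of $\Omega$, and in particular the condition $\omega^->0$, is the one fixed in Section \ref{sec3}. With that convention, the argument does not need $M\geq 2$ or any condition on the real parts, since a single coordinate $x_1$ already forces $g=0$.
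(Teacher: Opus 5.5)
Your proof is correct and is essentially the paper's argument: from $g\cdot x_j=x_j$ the $x'$-component gives $x_j^-g=0$, and $\im x_j^-=\omega_j^->0$ forces $g=0$. The only addition is that you make explicit the reduction from arbitrary points $h\cdot x$ of $N^\CC\cdot T^M$ to points of $T^M$ via conjugating stabilizers (immediate here since $N^\CC$ is abelian), which the paper leaves implicit.
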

\begin{proof}
Let $x=(x_1,\dots,x_M)\in T^M$ and $g\in N^\CC$. Then $g\cdot x_j=x_j$ implies that $x_j^-g=0$. Since $\im x_j\in\Omega$, it follows that $x_j^-\neq 0$ for all $j=1,\dots, M$ and therefore $g=0$.
\end{proof}
Now, Theorem \ref{Theo:existence} and Corollary \ref{Coro:iota} imply the following corollary. 
\begin{Coro}
	The orbit space $Z/G^\CC$ is a complex manifold biholomorphic to the symplectic reduction $\MM/G$ of $T^M$.
\end{Coro}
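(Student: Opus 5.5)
The corollary follows by checking the hypotheses of Theorem \ref{Theo:existence} and then applying Corollary \ref{Coro:iota}. We take $V=\CC^{(d+1)\times M}$, which is a Stein manifold on which $G^\CC$ acts holomorphically by the restricted linear action. We take $X=T^M$, which is $G$-stable since $G<\textup{SO}_0(1,d)$. Then $Z=G^\CC\cdot T^M$, exactly as in Section \ref{sec2}. The function $\rho$ of Section \ref{sec4} is $\textup{SO}_0(1,d)$-invariant, so it is $G$-invariant, and it is strictly plurisubharmonic.

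Next we verify the remaining standing assumptions of Section \ref{sec2}: that $G$ acts freely and properly on $T^M$.
\begin{enumerate}[i)]
	\item \emph{Properness.} $\textup{SO}_0(1,d)$ acts properly on $T$, hence diagonally and properly on $T^M$. Since $G$ is a closed subgroup of a conjugate of $N$, and therefore closed in $\textup{SO}_0(1,d)$, the restricted $G$-action is also proper.
	\item \emph{Freeness.} By our choice of coordinates $G\subset N\subset N^\CC$, so $G^\CC\subset N^\CC$ and $Z\subset N^\CC\cdot T^M$. Lemma \ref{Lem:isotropyN} then shows that $G^\CC$ acts freely on $Z$. In particular $G$ acts freely on $T^M$.
\end{enumerate}
Item ii) also gives the freeness hypothesis on $Z$ required in Theorem \ref{Theo:existence}.

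The two substantive hypotheses of Theorem \ref{Theo:existence} are already available. Proposition \ref{Prop:orbitcon} shows that $T^M$ is orbit connected with respect to the $G$-action, which implies weak orbit connectedness. Proposition \ref{Prop:exhaustionmodG} shows that $\rho$ is an exhaustion function mod $G$ along the local $G^\CC$-orbits.

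Theorem \ref{Theo:existence} now yields that $Z/G^\CC$ is a complex manifold and a geometric quotient. Corollary \ref{Coro:iota}\,i) gives the biholomorphism $\overline\iota:\MM/G\to Z/G^\CC$. Here $\MM$ is the zero level set of the momentum map $\mu:T^M\to\mathfrak g^*$, which is the restriction of $\mu_{\textup{SO}_0(1,d)}$ and agrees with the momentum map defined from $\rho$ in Section \ref{sec2}.

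None of these steps is a real obstacle. The only point that needs care is confirming that $G^\CC$ sits inside $N^\CC$ in the chosen light-cone coordinates, so that the freeness statement of Lemma \ref{Lem:isotropyN} transfers from $N^\CC$ to $G^\CC$ on the smaller domain $Z$.
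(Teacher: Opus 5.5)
Your proposal is correct and follows the paper's argument. The paper likewise obtains the corollary by combining Proposition \ref{Prop:orbitcon}, Proposition \ref{Prop:exhaustionmodG} and Lemma \ref{Lem:isotropyN} with Theorem \ref{Theo:existence} and Corollary \ref{Coro:iota}, while leaving implicit the standing hypotheses that you spell out (Stein ambient space, freeness and properness of the $G$-action on $T^M$, and $G^\CC\subset N^\CC$).
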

\begin{Rk}\label{Rk:nonexhaustion}
	The induced strictly plurisubharmonic function $\overline\rho:\MM/G\longrightarrow\RR$ is not exhaustive. For arbitrary $M$, consider the sequence
	$$
	x_n=((n+i,i,0'),(i,i,0'),(i,i,0'),\dots,(i,i,0'))\in T^M.
	$$
	Let $r\geq M$ and $K_r:=\rho^{-1}(-\infty,r]\cap\MM$. It is straightforward to check that $x_n\in K_r$ for all $n\in\NN$. However, we have for all $g\in G$,
	$$
	\|g\cdot x_n\|^2\geq\|g\cdot x_{n,1}\|^2\geq|(g\cdot x_{n,1})^-|^2=|x_{n,1}^-|^2=n^2+1
	$$
	thus $\inf\limits_{g\in G}\|g\cdot x_n\|\xrightarrow{n \to \infty}\infty$, yielding a counterexample to the exhaustion property of $\overline{\rho}$.
\end{Rk}
\begin{Theo}\label{Theo:ZGStein}	
	The orbit space $Z/G^\CC$ is a Stein manifold.
\end{Theo}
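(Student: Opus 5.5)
To prove Theorem \ref{Theo:ZGStein}, I will construct a smooth strictly plurisubharmonic exhaustion function on $Z/G^\CC\cong\MM/G$; Grauert's solution of the Levi problem then gives Steinness. By Corollary \ref{Coro:iota}, $\psi$ (induced by $\rho|_\MM$) is already smooth and strictly plurisubharmonic. Remark \ref{Rk:nonexhaustion} shows that it fails to be exhaustive only because the coordinates drift to infinity. So I will add a plurisubharmonic function built from $G^\CC$-invariant holomorphic functions that controls exactly those coordinates.

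The invariants come from the explicit action formula. The first light-cone coordinates $x_j^-$ are $N^\CC$-invariant, hence $G^\CC$-invariant, and are holomorphic on $\CC^{(d+1)\times M}$. The $x_j^-$ do not vanish on $Z$: they are nonzero on $T^M$ and invariant. Hence every component of
$$
x_j'-\frac{x_j^-}{x_1^-}x_1'
$$
is a $G^\CC$-invariant holomorphic function on $Z$. So are the Lorentz products $\langle x_j,x_k\rangle_{1,d}$, since $N^\CC\subset \textup{SO}(1,d)^\CC$. Let $f_1,\dots,f_L$ be the resulting finite family of invariant holomorphic functions on $Z$, namely
\begin{itemize}
\item[] the $x_j^-$, the $x_j'-\frac{x_j^-}{x_1^-}x_1'$, and the $\langle x_j,x_k\rangle_{1,d}$.
\end{itemize}
Set
$$
\Psi:=\psi+\sum_l|f_l|^2,
$$
where each $f_l$ is regarded as a function on the quotient. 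This is strictly plurisubharmonic on $Z/G^\CC$, because each $|f_l|^2$ is plurisubharmonic.

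The main step, which I expect to be the real obstacle, is to show that $\Psi$ is exhaustive. Fix $r$ and take a sequence $x_n\in\MM$ with $\Psi(\pi(x_n))\le r$. I want a subsequence and elements $g_n\in G$ such that $g_n\cdot x_n$ converges in $\MM$.
\begin{enumerate}[i)]
\item The bound $\rho\le r$ gives $\langle\omega_j,\omega_j\rangle_{1,d}\ge 1/r$ for every $j$. So no imaginary part approaches $\partial\Omega$, but these parts could still be unbounded.
\item The bound on $|x_j^-|$ controls $v_j^-$ and $\omega_j^-$.
\item In $\langle\omega_j,\omega_j\rangle_{1,d}=\omega_j^-\omega_j^+-\|\omega_j'\|^2$, boundedness of $\omega_j^-$ together with the lower bound from i) shows that $\omega_j^-$ stays away from $0$.
\item Using $G$, which acts by real translations $x'\mapsto x'+x^-g$ with $g\in G\subset\RR^{d-1}$, I can normalize $\pi_G(\omega_1')=0$ or a similar quantity.
\item The bounded invariants $x_j'-\frac{x_j^-}{x_1^-}x_1'$ then bound the remaining $x'$-coordinates modulo $G$. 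The moment-map condition defining $\MM$ should pin down the $G$-component of $x_1'$.
\item Boundedness of the $\langle x_j,x_j\rangle_{1,d}$ finally bounds the $x_j^+$.
\end{enumerate}

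If the moment-map condition does not directly give the bound in step v), I will instead use Proposition \ref{Prop:Momentcap} and the properness in Theorem \ref{Theo:existence}. The limit point is in $T^M$ because of i) and iii), and since $\MM$ is closed in $T^M$ it lies in $\MM$. I expect some care to be needed when the $x'$-directions split into parts in $G$ and in $G^\perp$: only the $G$-part is killed by the group, and it is the $G^\perp$-part that the invariants must bound. If a bound is missing there, I will add the further invariants $\pi_{G^\perp}$ of suitable combinations, or the norms $\|\im x_j\|^2$ taken along $\MM$, chosen to be plurisubharmonic.
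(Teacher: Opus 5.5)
Your strategy is the same as the paper's. You add to $\rho$ a plurisubharmonic function built from $G^\CC$-invariant holomorphic functions, which leaves the momentum map and $\MM$ unchanged, and then prove exhaustion on $\MM/G$ by normalizing with $G$ and using the equation of $\MM$. The gap is that your family of invariants is too small, and step iii) is false. Boundedness of $\omega_j^-$ together with $\omega_j^-\omega_j^+-\|\omega_j'\|^2\geq 1/r$ does not keep $\omega_j^-$ (or $x_j^-$) away from $0$, because $\omega_j^+$ can blow up. For a counterexample, let every component of $x_\epsilon$ be $(i\epsilon,\,i/\epsilon,\,0')$. Then $x_\epsilon\in\MM$ for every $G$, since the sum defining $\MM$ vanishes. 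Moreover $\rho(x_\epsilon)=M$, $x_j^-=i\epsilon$, $x_j'-\frac{x_j^-}{x_1^-}x_1'=0$ and $\langle x_j,x_k\rangle_{1,d}=-1$, so your $\Psi$ stays bounded as $\epsilon\to 0$. Yet $\pi(x_\epsilon)$ leaves every compact subset of $\MM/G$, because $|x_1^-|$ is a continuous positive function on the quotient that tends to $0$ along the sequence. The paper adds $\sum_j|x_j^-|^{-2}=\sum_j|1/x_j^-|^2$, which is plurisubharmonic because $1/x_j^-$ is an invariant holomorphic function on $Z$.

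Similarly, nothing in your list bounds the component of $x_1'$ in $(G^\CC)^\perp$; for $M=1$ your difference invariants vanish identically. Suppose $G\neq N$ and $e\perp G$ is a unit vector, and let all components of $x_t$ equal $(i,\,i(1+t^2),\,ite)$. Then $x_t\in\MM$, $\rho(x_t)=M$, every $x_j^-=i$, every $\langle x_j,x_k\rangle_{1,d}=-1$ and the difference invariants vanish. But the continuous invariant $\pi_{(G^\CC)^\perp}(x_1')=ite$ is unbounded. The paper adds $\|\pi_{(G^\CC)^\perp}(z_1')\|^2$, which is invariant because $g\cdot z_1'=z_1'+z_1^-g$. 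You mention such a term only as a contingency.

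Step v) is also left open, and it is where the real work lies. The paper normalizes with $g_n=-\pi_G(\re(\widetilde x_{n,1}'))$, where $\widetilde x_{n,j}'=x_{n,j}'/x_{n,j}^-$. With this choice the $G^\CC$-component of $(g_n\cdot x_{n,1})'$ becomes $i\,x_{n,1}^-\pi_G(\im\widetilde x_{n,1}')$. The equation of $\MM$, rewritten with the same-sign weights $\lambda_{n,j}=-|x_{n,j}^-|^2/\langle\omega_{n,j},\omega_{n,j}\rangle_{1,d}^2$, then expresses $-\pi_G(\im\widetilde x_{n,1}')$ as a convex combination of the vectors $\pi_G(\im(\widetilde x_{n,j}'-\widetilde x_{n,1}'))$. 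These are bounded by the difference invariants only after dividing by $x_{n,j}^-$, which again needs the missing lower bound on $|x_{n,j}^-|$.

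Your fallback via Proposition \ref{Prop:Momentcap} and the properness in Theorem \ref{Theo:existence} cannot replace this argument. Properness starts from sequences that already converge and says nothing about whether a sequence in $\MM$ with bounded $\Psi$ has bounded $G$-translates. In addition, your claim that a limit point lies in $T^M$ rested on the false step iii).
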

\begin{proof}
Since $Z/G^\CC$ is biholomorphic to $\MM/G$, we will prove that $\MM/G$ is Stein. Consider the function $\psi:Z\longrightarrow\RR$ defined by
\begin{align*}
\psi(z)=&\sum\limits_{j=1}^M \left(|z_j^-|^2+\frac{1}{|z_j^-|^2}+|\langle z_j,z_j\rangle_{1,d}|^2\right)+\sum\limits_{j=2}^M\|z_1^-z_j'-z_j^-z_1'\|^2\\
&+\|\pi_{(G^\CC)^\perp}(z_1')\|^2,
\end{align*}
where $\pi_{(G^\CC)^\perp}$ is the orthogonal projection onto $(G^\CC)^\perp$ with respect to the decomposition $\CC^{d-1}=G^\CC\oplus (G^\CC)^\perp$. Note that $\psi$ is well-defined on $Z$, since for all $z=g\cdot x \in Z$, we have $z_j^-=(g \cdot x_j)^-=x_j^- \neq 0$ for all $j=1,\dots,M$. It is straightforward to check that $\psi$ is a $G^\CC$-invariant smooth plurisubharmonic function. Consider the adapted function $\Psi:=\rho+\psi$ on $T^M$. This function is strictly plurisubharmonic, and since $\psi$ is $G^\CC$-invariant, $\Psi$ induces the same momentum map $\mu$ as $\rho$. Let $\overline{\Psi}$ be the induced strictly plurisubharmonic function on $\MM/G$. We have to prove that $\overline{\Psi}$ is an exhaustion function. To this end let $\pi:\MM\longrightarrow\MM/G$ be the quotient map and $r\in\RR$. Further, let $(\pi(x_n))_{n\in\NN}\subset\overline{\Psi}^{-1}(-\infty,r]$ be a sequence. We set 
$$
K_r:=\pi^{-1}\left(\overline{\Psi}^{-1}(-\infty,r]\right)=\Psi^{-1}(-\infty,r]\cap\MM
$$ 
and consider the preimage sequence $(x_n)_{n\in\NN}\subset K_r$. Since $\Psi$ and $\mu$ are smooth, $K_r$ is closed. We will prove that there exists a sequence $(g_n)_{n\in\NN}\subset G$ such that the sequence $(g_n\cdot x_n)_{n\in\NN}$ is bounded. This sequence would then have a convergent subsequence in $K_r$, which would imply that the sequence $(\pi(x_n))_{n\in\NN}$ has a convergent subsequence. Hence, $\overline{\Psi}^{-1}(-\infty,r]$ is compact.

Consider $\widetilde x_{n,j}':=\dfrac{1}{x_{n,j}^{-}}x_{n,j}'$ and $g_n=-\pi_{G}(\re(\widetilde x_{n,1}'))$.\\
i) Since $\rho>0$ and $\psi>0$, we have for all $j=1,\dots,M$, 
$$
|(g_n\cdot x_{n,j})^-|^2=|x_{n,j}^-|^2<\Psi(x_n)\leq r=:C_j^-(r)\text{ and }\frac{1}{|(g_n\cdot x_{n,j})^-|^2}=\frac{1}{|x_{n,j}^-|^2}<r.
$$
ii) We compute 
$$
\im(\widetilde x_{n,j}')=\frac{1}{|x_{n,j}^-|^2}\left(v_{n,j}^-\omega_{n,j}'-\omega_{n,j}^-v_{n,j}'\right).
$$
Observe that $g_n\cdot \widetilde x_{n,1}'=\widetilde x_{n,1}'-\pi_{G}(\re(\widetilde x_{n,1}'))$. It follows that
$$
\pi_{G^\CC}((g_n\cdot x_{n,1})')=x_{n,1}^{-}\pi_{G^\CC}(g_n\cdot \widetilde x_{n,1}')=ix_{n,1}^{-}\pi_{G}(\im(\widetilde x_{n,1}')),
$$
where $\pi_{G^\CC}$ is the orthogonal projection onto $G^\CC$. For $M=1$, $x_n\in\MM$ implies that
$$
0=\pi_{G}\left(\omega_{n,1}^-v_{n,1}'-v_{n,1}^-\omega_{n,1}'\right)=-|x_{n,1}^-|^2\pi_{G}\left(\im(\widetilde x_{n,1}')\right).
$$
Hence, $\pi_{G}(\im(\widetilde x_{n,1}'))=0$. It follows that
$$
\|(g_n\cdot x_{n,1})'\|^2=\|\pi_{(G^\CC)^\perp}((g_n\cdot x_{n,1})')\|^2=\|\pi_{(G^\CC)^\perp}(x_{n,1}')\|^2<\Psi(x_n)\leq r=:C_1'(r).
$$
For $M\geq 2$, we compute
\begin{align*}
\widetilde x_{n,1j}':=\widetilde x_{n,j}'-\widetilde x_{n,1}'&=\frac{1}{x_{n,1}^-x_{n,j}^-}\left(x_{n,1}^-x_{n,j}'-x_{n,j}^-x_{n,1}'\right).
\end{align*}
In particular, we have $g_n\cdot\widetilde x_{n,1j}'=\widetilde x_{n,1j}'$ and 
$$
\|\widetilde x_{n,1j}'\|^2<\frac{\Psi(x_n)}{|x_{n,1}^-|^2|x_{n,j}^-|^2}<r^3.
$$ 
Hence,
\begin{align*}
\|(g_n\cdot x_{n,j})'\|^2&=|x_{n,j}^-|^2\|g_n\cdot \widetilde x_{n,j}'\|^2\\
&<2r\left(\|\widetilde x_{n,1j}'\|^2+\|g_n\cdot \widetilde x_{n,1}'\|^2\right)\\
&<2r^4+2r\|g_n\cdot \widetilde x_{n,1}'\|^2.
\end{align*}
We set $\lambda_{n,j}:=\dfrac{-|x_{n,j}^-|^2}{\langle\omega_{n,j},\omega_{n,j}\rangle_{1,d}^2}$. Now, $x_n\in\MM$ implies that
\begin{align*}
0&=\pi_{G}\left(\sum\limits_{j=1}^M\dfrac{\omega_{n,j}^{-}v_{n,j}'-v_{n,j}^{-}\omega_{n,j}'}{\langle\omega_{n,j},\omega_{n,j}\rangle_{1,d}^2}\right)\\
&=\pi_{G}\left(\sum\limits_{j=1}^M\lambda_{n,j}\im(\widetilde x_{n,j}')\right)\\
&=\pi_{G}\left(\sum\limits_{j=1}^M\lambda_{n,j}\im(\widetilde x_{n,1j}'+\widetilde x_{n,1}')\right)\\
&=\sum\limits_{j=1}^M\lambda_{n,j}\pi_{G}(\im(\widetilde x_{n,1j}'))+\sum\limits_{j=1}^M\lambda_{n,j}\pi_{G}(\im(\widetilde x_{n,1}')).
\end{align*}
We set $c_{n,j}:=\dfrac{\lambda_{n,j}}{\sum\limits_{k=1}^M\lambda_{n,k}}$. Observe that $c_{n,j}>0$ and $\sum\limits_{j=1}^M c_{n,j}=1$. It follows that
\begin{align*}
\|\pi_{G}(\im(\widetilde x_{n,1}'))\|^2&=\left\|\sum\limits_{j=1}^M c_{n,j}\pi_{G}(\im(\widetilde x_{n,1j}'))\right\|^2\\
&\leq\sum\limits_{j=1}^M c_{n,j}\|\pi_{G}(\im(\widetilde x_{n,1j}'))\|^2\\
&\leq\sum\limits_{j=1}^M c_{n,j}\|\widetilde x_{n,1j}'\|^2<r^3.
\end{align*}
Finally, we have
\begin{align*}
\|g_n\cdot \widetilde x_{n,1}'\|^2&=\frac{1}{|x_{n,1}^-|^2}\|\pi_{G^\CC}((g_n\cdot x_{n,1})')+\pi_{(G^\CC)^\perp}((g_n\cdot x_{n,1})')\|^2\\
&\leq2\frac{1}{|x_{n,1}^-|^2}\left(\|\pi_{G^\CC}((g_n\cdot x_{n,1})')\|^2+\|\pi_{(G^\CC)^\perp}((g_n\cdot x_{n,1})')\|^2\right)\\
&<2\|\pi_{G}(\im(\widetilde x_{n,1}'))\|^2+2r^2\\
&<2(r^3+r^2).
\end{align*}
Hence, for all $j=2,\dots,M$, 
$$
\|(g_n\cdot x_{n,j})'\|^2<6r^4+4r^3=:C_j'(r).
$$
iii) For all $j=1,\dots, M$, we have $\langle x_{n,j},x_{n,j}\rangle_{1,d}=x_{n,j}^-x_{n,j}^+-\langle x_{n,j}',x_{n,j}'\rangle$. This implies that 
\begin{align*}
|(g_n\cdot x_{n,j})^+|^2&=\left|\dfrac{1}{x_{n,j}^-}\left(\langle x_{n,j},x_{n,j}\rangle_{1,d}+\langle (g_n\cdot x_{n,j})',(g_n\cdot x_{n,j})'\rangle\right)\right|^2\\
&<2r\left(\Psi(x_n)+\|(g_n\cdot x_{n,j})'\|^4\right)\\
&<2r^2+2rC_j'(r)^2=:C_j^+(r).
\end{align*}
In summary, for all $n\in\NN$, we have 
\begin{align*}
\|g_n\cdot x_n\|^2&=\sum\limits_{j=1}^M|(g_n\cdot x_{n,j})^-|^2+|(g_n\cdot x_{n,j})^+|^2+\|(g_n\cdot x_{n,j})'\|^2\\
&<\sum\limits_{j=1}^M\left(C_j^-(r)+C_j^+(r)+C_j'(r)\right).
\end{align*}
This shows that the sequence $(g_n\cdot x_n)_{n\in\NN}$ is bounded.
\end{proof}
\begin{Coro}\label{Coro:ZStein}
The domain $Z$ is a Stein manifold.
\end{Coro}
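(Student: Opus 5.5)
The plan is to derive Corollary \ref{Coro:ZStein} from Theorem \ref{Theo:ZGStein} via the theorem of Matsushima and Morimoto \cite[Theorem 4]{M}. In the form we need, it states the following: if a complex Lie group $H$ acts freely and properly on a complex manifold $Z$, the quotient $Z/H$ is Stein, and $H$ is Stein, then $Z$ is Stein. The reason is that $Z\to Z/H$ is then a holomorphic principal $H$-bundle, and a holomorphic fibre bundle with Stein base and Stein (Lie group) fibre is Stein.

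The steps, in order, are as follows.

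First, check that $G^\CC$ acts freely and properly on $Z$, so that $\pi:Z\to Z/G^\CC$ is a holomorphic principal $G^\CC$-bundle.
\begin{enumerate}[i)]
	\item Freeness follows from Lemma \ref{Lem:isotropyN}, since $G^\CC\subset N^\CC$ and $Z\subset N^\CC\cdot T^M$.
	\item Properness was established in the proof of Theorem \ref{Theo:existence}. Its hypotheses hold here: Proposition \ref{Prop:orbitcon} gives orbit connectedness, hence weak orbit connectedness, and Proposition \ref{Prop:exhaustionmodG} gives the exhaustion property of $\rho$ mod $G$.
	\item The local triviality of $\pi$ comes from Holmann's slice results \cite{Hol}, which were already used in the proof of Theorem \ref{Theo:existence}.
\end{enumerate}

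Second, note that the structure group $G^\CC$ is Stein. It is a unipotent complex algebraic group, since it is identified with a $\CC$-linear subspace of $\CC^{d-1}\cong\mathfrak{n}^\CC$, acting by translations. Hence $G^\CC\cong\CC^k$ as a complex manifold, which is certainly Stein.

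Third, apply \cite[Theorem 4]{M}. The base $Z/G^\CC$ is Stein by Theorem \ref{Theo:ZGStein}, and the fibre $G^\CC$ is Stein, so the total space $Z$ of the principal bundle is Stein. Alternatively, since $G^\CC\cong\CC^k$ is contractible and Stein, the bundle is holomorphically trivial over the Stein base by the Oka–Grauert principle. This gives $Z\cong Z/G^\CC\times\CC^k$ directly, and a product of Stein manifolds is Stein.

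The main obstacle is purely bookkeeping: confirming that the quotient map is indeed a holomorphic principal bundle, i.e.\ that we have local holomorphic sections, so that the hypotheses of the Matsushima–Morimoto theorem are met. This follows from the free and proper action of a complex Lie group established above. All the analytic work has already been done in Theorem \ref{Theo:ZGStein}.
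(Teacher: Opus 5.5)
Your proposal is correct and follows the paper's proof: you combine Theorem \ref{Theo:ZGStein} with the Matsushima--Morimoto theorem \cite[Theorem 4]{M}, using that $G^\CC\cong\CC^k$ is Stein. You make explicit what the paper leaves implicit (freeness via Lemma \ref{Lem:isotropyN}, properness and the principal bundle structure from the proof of Theorem \ref{Theo:existence} and \cite{Hol}), and your Oka--Grauert alternative giving $Z\cong Z/G^\CC\times\CC^k$ is also valid.
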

\begin{proof}
Since the group $G^\CC$ is a Stein manifold, the statement follows from a classical result of Matsushima and Morimoto, see \cite[Theorem 4]{M}.
\end{proof}

\bibliographystyle{amsalpha}
\bibliography{Extended_Future_Tube_Conjecture_for_Unipotent_Subgroups}

\end{document}